%
%
%
%
%

\documentclass[10pt,reqno]{amsart}

\newtheorem{THM}{Theorem}
\newtheorem{LMA}[THM]{Lemma}
\newtheorem{PROP}[THM]{Proposition}

\numberwithin{equation}{section}

\usepackage{amsmath,latexsym}
\usepackage{amssymb}
\usepackage{euscript}
\newcommand{\showon}{\begin{eqnarray*}}
\newcommand{\showoff}{\end{eqnarray*}}

\newcommand{\Sym}{\mathfrak{S}}

\newcommand{\goesto}{\rightarrow}

\newcommand{\A}{\EuScript{A}} \renewcommand{\a}{\mathbf{a}}

   \newcommand{\CC}{\mathbb{C}}

\newcommand{\HH}{\EuScript{H}}

\newcommand{\K}{\EuScript{K}}

\newcommand{\M}{\EuScript{M}}

\newcommand{\zz}{\mathbf{z}}
\newcommand{\ww}{\mathbf{w}}

\begin{document}

\title[A converse to Grace-Walsh-Szeg\H{o}]{A converse to
the Grace-Walsh-Szeg\H{o} Theorem}

\author[P.~Br\"and\'en]{Petter Br\"and\'en}\thanks{The first author was partially supported 
by the G\"oran Gustafsson Foundation and the second author was supported by 
NSERC Discovery Grant OGP0105392. }
       \address{Department of Mathematics, Royal Institute of 
Technology,
       SE-100 44 Stockholm, Sweden}
       \email{pbranden@math.kth.se}
       
\author[D.~G.~Wagner]{David G. Wagner}
       \address{Department of Combinatorics and Optimization, 
University of Waterloo,
       Ontario, Canada N2L 3G1}
       \email{dgwagner@math.waterloo.ca}
	   
\keywords{Grace-Walsh-Szeg\H{o} coincidence theorem, homogeneous groups, Grace-like polynomials, stable polynomials}
\subjclass[2000]{20B10, 30C15}

\begin{abstract}
 We prove that the symmetrizer of  a permutation  group preserves stability if and only if the group is orbit homogeneous. A consequence is that the hypothesis of permutation invariance in the
Grace-Walsh-Szeg\H{o} Coincidence Theorem cannot be relaxed. In the process we obtain a new characterization of the {\em Grace-like polynomials} introduced by D. Ruelle, and  prove that the class of such polynomials can  be endowed with a natural multiplication. 
\end{abstract}

\maketitle
\section{Introduction and main result}
The Grace-Walsh-Szeg\H{o} Coincidence Theorem  is undoubtably one of 
the most useful
phenomena governing the location of zeros of multivariate complex 
polynomials -- see \cite{RS}.  It is 
also very robust -- after a full century of investigation, 
no significant
improvement on it has been made. (Although significant generalizations to other settings have been obtained \cite{H}.) The main purpose of this note is to show 
that, in one sense at least, no such improvement is possible.\\

A \emph{circular region} is a proper subset of the complex plane that 
is bounded by
either a circle or a straight line, and is either open or closed.  A 
polynomial is
\emph{multiaffine} provided that each variable occurs at most to the 
first power.
\begin{PROP}[Grace-Walsh-Szeg\H{o} \cite{G,Sz,W}]
Let $f\in\CC[z_1,\ldots,z_n]$ be a multiaffine polynomial
that is invariant under all permutations of the variables.
Let $\A\subset\CC$ be a circular region.  Assume that either $\A$ is 
convex or that
the degree of $f$ is $n$.  For any $\zeta_1,\ldots,\zeta_n\in\A$ there 
is a $\zeta\in\A$
such that
$$f(\zeta_1,...,\zeta_n)=f(\zeta,\ldots,\zeta).$$
\end{PROP}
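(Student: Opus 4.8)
The plan is to reduce the Proposition to the classical apolarity theorem of Grace, which is the natural engine behind all three attributions \cite{G,Sz,W}. Since $f$ is multiaffine and symmetric, it is a linear combination of squarefree monomials whose coefficient depends only on the monomial's size, i.e. a $\CC$-linear combination of elementary symmetric polynomials, say $f=\sum_{k=0}^n c_k e_k(z_1,\ldots,z_n)$. Specializing all variables to a single value $z$ and using $e_k(z,\ldots,z)=\binom nk z^k$ yields the univariate \emph{diagonal polynomial}
\[
\phi(z):=f(z,\ldots,z)=\sum_{k=0}^n \binom nk c_k\, z^k .
\]
Writing $V:=f(\zeta_1,\ldots,\zeta_n)$, the assertion is exactly that $\phi(z)-V$ has a zero in $\A$.

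First I would produce a companion polynomial whose zeros are the given points, namely $\psi(z):=\prod_{i=1}^n (z-\zeta_i)$, all of whose zeros lie in $\A$ by hypothesis. The heart of the argument is the claim that $\phi-V$ and $\psi$ are \emph{apolar}. To verify this I would expand $\psi(z)=\sum_k \binom nk b_k z^k$, so that $\binom nk b_k=(-1)^{n-k}e_{n-k}(\zeta_1,\ldots,\zeta_n)$, record the coefficients $\alpha_k$ of $\phi-V$ (thus $\alpha_k=c_k$ for $k\ge 1$ and $\alpha_0=c_0-V$), and evaluate the apolarity form $\sum_{k=0}^n (-1)^k\binom nk \alpha_k b_{n-k}$. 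After the binomial factors and signs cancel this collapses to $\sum_{k=0}^n c_k e_k(\zeta_1,\ldots,\zeta_n)-V=f(\zeta_1,\ldots,\zeta_n)-V=0$, which is precisely the apolarity condition. This computation is short, but it is the crux: it is where the data $\zeta_1,\ldots,\zeta_n$ and the target value $V$ get encoded as a symmetric relation between the two univariate polynomials.

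With apolarity in hand, Grace's theorem applies: every circular region containing all zeros of one of two apolar polynomials contains at least one zero of the other. Since $\zeta_1,\ldots,\zeta_n\in\A$ are the zeros of $\psi$, its apolar partner $\phi-V$ must have a zero $\zeta\in\A$, and then $f(\zeta,\ldots,\zeta)=\phi(\zeta)=V=f(\zeta_1,\ldots,\zeta_n)$, as required.

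The one genuinely delicate point — and where the two alternative hypotheses enter — is the degree bookkeeping in the appeal to Grace's theorem. If $\deg f=n$ then $c_n\ne 0$, so $\phi-V$ has exact degree $n$ and Grace's theorem is available for \emph{every} circular region, convex or not. If instead $\deg f<n$, then $\phi-V$ has degree $<n$; it acquires a zero ``at infinity'', and Grace's conclusion can fail for the non-convex circular regions (the exteriors of disks, which contain $\infty$). Restricting to convex $\A$ — disks and half-planes, none of which contain $\infty$ in their interior — forces the guaranteed zero to be finite and in $\A$. Thus the dichotomy ``either $\A$ convex or $\deg f=n$'' is exactly the hypothesis that makes the apolarity step conclusive, and I expect this zero-at-infinity analysis, rather than the algebraic apolarity identity, to be the main obstacle in a fully rigorous write-up. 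A cleaner self-contained alternative that sidesteps Grace's theorem is an induction on $n$ that merges the values two at a time using the $n=2$ case together with Laguerre's theorem, at the cost of considerably more intricate bookkeeping.
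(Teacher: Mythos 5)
The paper offers no proof of this Proposition at all: it is quoted as a classical result and attributed to \cite{G,Sz,W}, so there is no internal argument to compare yours against; the comparison must be with the classical proofs, and your route --- encode $f=\sum_k c_k e_k$, form the diagonal polynomial $\phi$, and show $\phi-V$ is apolar to $\psi(z)=\prod_i(z-\zeta_i)$ --- is essentially Szeg\H{o}'s original reduction to Grace's apolarity theorem. Your central computation is correct: with the convention that $\sum_k\binom nk a_kz^k$ and $\sum_k\binom nk b_kz^k$ are apolar when $\sum_k(-1)^k\binom nk a_kb_{n-k}=0$, one has $b_{n-k}=(-1)^k e_k(\zeta_1,\ldots,\zeta_n)/\binom nk$, the signs and binomials cancel, and the form collapses to $\sum_k \alpha_k e_k(\zeta_1,\ldots,\zeta_n)=f(\zeta_1,\ldots,\zeta_n)-V=0$ as you claim. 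When $\deg f=n$ both polynomials have exact degree $n$ and Grace's theorem closes the argument for an arbitrary circular region; that half is complete.

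The gap is exactly the one you flagged, and it is slightly worse than your sketch suggests: convexity of $\A$ does \emph{not} by itself force the guaranteed zero to be finite, because a \emph{closed half-plane} --- which is convex and allowed by the Proposition --- is not a spherical cap: its closure on the Riemann sphere contains $\infty$, and it is neither open nor closed as a subset of the sphere. So when $\deg(\phi-V)=m<n$ and you invoke the homogeneous (projective) form of Grace's theorem (as in H\"ormander \cite{H}), treating $\phi-V$ as a degree-$n$ form with $n-m$ zeros at $\infty$, the cap argument works cleanly for open or closed disks and for open half-planes (caps excluding $\infty$), but for a closed half-plane it only yields a zero in $\A\cup\{\infty\}$. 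The patch is routine but must be said: $\phi-V$ has finitely many zeros (the degenerate case $\phi-V$ a nonzero constant cannot occur, since apolarity would then read $\alpha_0 e_0\neq 0$; if $f$ is constant the claim is trivial), so apply the open-half-plane case to a shrinking family of open half-planes $\A_\eps\supset\A$ and pigeonhole: some fixed zero lies in $\A_\eps$ for arbitrarily small $\eps>0$, hence in $\A$. With that one limiting argument supplied --- or with a citation to a version of Grace's theorem stated for degree-deficient apolar pairs and convex regions, as in \cite{RS} --- your proof is a correct and complete rendering of the classical argument behind the Proposition.
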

The question we address is:\ for which permutation groups 
$G\leq\Sym_n$
does an analogue of the Grace-Walsh-Szeg\H{o} Theorem hold?  That is, 
can one
relax the hypothesis that $f$ is invariant under \emph{all} 
permutations of the variables
by requiring only that $f$ be invariant under the permutations in a 
subgroup $G$?  (As usual, the action of $\Sym_n$ on $\CC[z_1,\ldots,z_n]$
is defined for each $\sigma\in \Sym_n$ by
$\sigma(z_i)=z_{\sigma(i)}$ and algebraic extension, and this
restricts to define an action of any subgroup.)
If $G \leq \Sym_n$ then a polynomial
$f \in \CC[z_1,\ldots, z_n]$ is \emph{$G$-invariant} if 
$\sigma(f)=f$ for all $\sigma \in G$.
Let $\HH$ denote the open upper half-plane.
Our main theorem is the following: 
\begin{THM}\label{noext}
Let $G \leq \Sym_n$ be a permutation group. Suppose that for any 
multiaffine $G$-invariant polynomial 
$f \in \CC[z_1,\ldots,z_n]$ and any $\zeta_1,\ldots, \zeta_n \in \HH$
there is a $\zeta \in \HH$ such that 
  $$f(\zeta_1,\ldots,\zeta_n)=f(\zeta,\ldots,\zeta).$$
Then every $G$-invariant polynomial is also $\Sym_n$-invariant (i.e., symmetric). 
\end{THM}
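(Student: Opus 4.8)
The plan is to prove the contrapositive: assuming $G$ is not orbit homogeneous (equivalently, as the abstract suggests, that $G$-invariance does not coincide with full symmetry), I would construct an explicit multiaffine $G$-invariant polynomial and points $\zeta_1,\ldots,\zeta_n\in\HH$ for which no common diagonal value $\zeta\in\HH$ can be found. The key observation is that $G$-invariance equals $\Sym_n$-invariance precisely when $G$ is transitive on all subsets of $\{1,\ldots,n\}$ of each fixed size; so if the two invariance notions differ, there must exist some $k$ and two $k$-subsets $S,T$ lying in different orbits of $G$ acting on $k$-subsets. This combinatorial asymmetry is what I would exploit.

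\medskip
\noindent First I would translate the failure of symmetry into the existence of a $G$-invariant multiaffine polynomial that genuinely distinguishes two configurations. A natural candidate is to take the orbit sum of a monomial $\prod_{i\in S}z_i$ under $G$; because $S$ and $T$ lie in distinct $G$-orbits, the resulting polynomial $f$ is $G$-invariant but not $\Sym_n$-invariant, and by multiaffineness its coefficients record exactly which $k$-subsets belong to the orbit of $S$. I would then choose the evaluation points $\zeta_1,\ldots,\zeta_n$ in $\HH$ so as to separate the index $S$-orbit from the index $T$-orbit numerically. A convenient device is to use values lying along a ray or near the real axis so that the dominant contribution to $f(\zeta_1,\ldots,\zeta_n)$ comes from a single orbit; this forces the value $f(\zeta_1,\ldots,\zeta_n)$ into a region of $\CC$ that the diagonal values $f(\zeta,\ldots,\zeta)$ cannot reach.

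\medskip
\noindent The crux is the final step: understanding the image of the diagonal map $\zeta\mapsto f(\zeta,\ldots,\zeta)$ and showing the target value lies outside it. Since $f$ is multiaffine of some degree $d$, the diagonal restriction $g(\zeta)=f(\zeta,\ldots,\zeta)$ is a univariate polynomial of degree at most $d$, and I would need to show that $g(\HH)$ omits the point $f(\zeta_1,\ldots,\zeta_n)$. The cleanest way to guarantee this is an argument-principle or winding-number computation: by choosing the $\zeta_i$ appropriately I would arrange that $f(\zeta_1,\ldots,\zeta_n)$ has an argument (or modulus) incompatible with any value $g(\zeta)$ for $\zeta\in\HH$, using that $g$ has controlled coefficients determined by the orbit structure. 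Alternatively, one may perturb a carefully chosen degenerate configuration, invoking continuity to preserve the obstruction on an open set of parameters.

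\medskip
\noindent The main obstacle I anticipate is the separation step: producing evaluation points that simultaneously (i) respect no constraint forcing a diagonal coincidence, yet (ii) have an image under $f$ provably outside $g(\HH)$. The difficulty is that $g(\HH)$ can be a large, even dense, subset of $\CC$ unless the degree and coefficient pattern of $g$ are tightly controlled; so the art lies in selecting both $f$ (i.e., which orbit of $k$-subsets to use, perhaps the smallest orbit, or one realizing an extremal size $k$) and the points $\zeta_i$ in tandem. I expect the correct choice is to reduce to the \emph{minimal} witnessing pair $S,T$ where $|S\triangle T|$ is as small as possible—ideally a single transposition's worth of disagreement—so that $f$ behaves like a low-degree symmetric function plus a small nonsymmetric correction, making the image $g(\HH)$ tractable and the obstruction explicit.
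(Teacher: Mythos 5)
Your proposal defers the entire mathematical content of the theorem to the ``separation step'' and never supplies a mechanism for it. Naming an ``argument-principle or winding-number computation'' or a ``perturbation of a degenerate configuration'' is a placeholder, not an argument, and your own worry is justified: for the diagonal restriction $g(\zeta)=f(\zeta,\ldots,\zeta)$, the omitted set $\CC\drop g(\HH)$ is exactly $\{c:\ g-c \text{ has no zero in } \HH\}$, i.e.\ the set of $c$ for which $g-c$ is stable. So certifying $f(\zeta_1,\ldots,\zeta_n)\notin g(\HH)$ \emph{is} a stability statement, and you are driven back to precisely the framework the paper uses. The paper avoids any direct construction for transitive $G$: it runs the implication forward, showing the hypothesis forces $F(\zz,\ww)=\frac 1{|G|}\sum_{\sigma\in G}\prod_{j=1}^n(z_{\sigma(j)}+w_j)$ to be stable (a zero at $(\zz_0,\ww_0)\in\HH^{2n}$ would make the $G$-invariant multiaffine polynomial $F(\zz,\ww_0)$ have diagonal value $\prod_{j}(\zeta+w_j^0)=0$ for some $\zeta\in\HH$, which is impossible), then invokes Proposition \ref{TFAE} to conclude that $T_G$ preserves stability, and Theorem \ref{sh} --- whose hard direction rests on Newton's inequalities and an induction on orbit profiles --- to conclude that $G$ is orbit homogeneous, hence, with transitivity, homogeneous. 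Your contrapositive route would in effect have to reprove the hard direction of Theorem \ref{sh} by hand for every transitive non-homogeneous $G$; nothing in your sketch (picking a small orbit, minimizing $|S\triangle T|$) supplies the needed quantitative input, whereas the paper's mechanism is exactly the Newton-inequality bound $|\K|>\frac12\binom mk$ forcing a single orbit of $k$-subsets.

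There are also two framing errors at the outset. First, ``$G$-invariance does not coincide with full symmetry'' is not equivalent to ``$G$ is not orbit homogeneous'': the group $\Sym_k\times\Sym_{n-k}$ is orbit homogeneous, yet $\sum_{j\leq k}z_j$ is invariant and not symmetric. The relevant condition for multiaffine polynomials is homogeneity ($k$-homogeneity for all $k$), and the intransitive case must be treated separately --- the paper does so with the explicit $G$-invariant linear counterexample $\frac 1{|A|}\sum_{j\in A}z_j-\frac i{|B|}\sum_{j\in B}z_j$, whose diagonal image is a half-plane missing achievable values; your scheme, started from ``not orbit homogeneous,'' would silently skip these groups. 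Second, the equivalence between set-transitivity and ``invariant $\Rightarrow$ symmetric'' holds only \emph{within multiaffine polynomials}: for $G=\mathfrak{A}_n$, which is homogeneous, the Vandermonde product is invariant but not symmetric, while every multiaffine invariant is symmetric; so if you take the contrapositive with ``every $G$-invariant polynomial'' read literally, your starting assumption can hold while no multiaffine counterexample exists, and the construction you propose cannot get off the ground. The conclusion must be read, as the paper's proof actually delivers it, at the level of multiaffine invariants (equivalently, $G$ homogeneous). Even after repairing the framing, the decisive gap remains: producing $(f,\zeta_1,\ldots,\zeta_n)$ for a transitive non-homogeneous $G$ is the theorem's entire content, and your proposal does not achieve it.
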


\section{Stability preservers and Grace-like polynomials}
In order to prove Theorem \ref{noext} we relate the problem at 
hand to two similar problems, which we now describe.

A multivariate polynomial
$f\in\CC[z_1,\ldots,z_n]$ is \emph{stable} provided that it is 
nonvanishing on $\HH^n$. We say that a linear operator, $T$, on polynomials 
{\em preserves stability} if $T(f)$ is stable or identically zero 
whenever $f$ is stable. We are interested in whether a linear 
transformation of the form
\begin{equation}\label{tform}
T= \sum_{\sigma \in \Sym_n}c_\sigma \sigma
\end{equation}
preserves stability, in which $\{c_\sigma:\ \sigma\in\Sym_n\}$ are 
complex numbers. 
The following proposition gives a point of attack on such questions.
\begin{PROP}[Borcea-Br\"and\'en \cite{BB}]
Let $\M_n$ be the space of complex multiaffine polynomials in $n$ 
variables. For a linear transformation $T : \M_n \rightarrow \M_n$ 
whose image is at least two-dimensional the following are
equivalent:
\begin{enumerate}
\item
$T$ preserves stability;
\item
the polynomial $T\left(\prod_{i=1}^n(z_i+w_i)\right)$
in $\M_{2n}$ is stable. 
\end{enumerate}
\end{PROP}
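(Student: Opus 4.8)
The plan is to pass through the \emph{symbol} of $T$, namely the polynomial $G_T(\mathbf{z},\mathbf{w}) := T\!\left(\prod_{i=1}^n(z_i+w_i)\right)$, where $T$ acts on the $\mathbf{z}$-variables and the $\mathbf{w}$-variables are carried along as parameters. Writing $\mathbf{z}^S=\prod_{i\in S}z_i$ and expanding $\prod_i(z_i+w_i)=\sum_{S\subseteq[n]}\mathbf{z}^S\mathbf{w}^{[n]\drop S}$ gives $G_T=\sum_S T(\mathbf{z}^S)\,\mathbf{w}^{[n]\drop S}$, so the symbol records the complete action of $T$. I would prove the two implications separately, and I expect the implication $(2)\Rightarrow(1)$ to carry essentially all of the difficulty.

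For $(1)\Rightarrow(2)$: each linear factor $z_i+w_i$ is stable, so $\prod_i(z_i+w_i)$ is stable in the $2n$ variables; in particular, for every fixed $\mathbf{w}\in\HH^n$ it is a stable polynomial in $\mathbf{z}$. Since $T$ preserves stability, $G_T(\cdot,\mathbf{w})$ is then stable or identically zero in $\mathbf{z}$ for each such $\mathbf{w}$. Because nonvanishing of $G_T$ on $\HH^{2n}$ is exactly the statement that $G_T(\cdot,\mathbf{w})$ is nonvanishing on $\HH^n$ for every $\mathbf{w}\in\HH^n$, the only thing standing between us and the stability of $G_T$ is the degenerate possibility that $G_T(\cdot,\mathbf{w})\equiv 0$ for some $\mathbf{w}\in\HH^n$. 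This is where the hypothesis that the image of $T$ is at least two-dimensional is needed: a rank-one operator $f\mapsto \lambda(f)\,p_0$ trivially preserves stability yet can have an unstable symbol, so some nondegeneracy is unavoidable. I would rule out the degenerate case using Hurwitz's theorem together with the two-dimensionality of the image, concluding that $G_T$ is genuinely stable.

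For $(2)\Rightarrow(1)$: the engine is the coefficient-extraction identity
$$T(f) = \left[\,f^{\vee}(\partial_{w_1},\dots,\partial_{w_n})\,G_T(\mathbf{z},\mathbf{w})\,\right]_{\mathbf{w}=\mathbf{0}},$$
where, for $f=\sum_S a_S\mathbf{z}^S$, the \emph{reversal} is $f^{\vee}(\mathbf{w})=\sum_S a_S\mathbf{w}^{[n]\drop S}$; this is verified by differentiating monomial-by-monomial and reading off coefficients. Granting that $G_T$ is stable, I would deduce stability of $T(f)$ from two facts. First, the differential operator $f^{\vee}(\partial_{\mathbf{w}})$ preserves stability whenever $f$ is stable: its own symbol is $f^{\vee}(\partial_{\mathbf{w}})\prod_i(w_i+u_i)=f(w_1+u_1,\dots,w_n+u_n)$, which is visibly stable, and one converts this into an actual stability-preservation statement by peeling off one variable at a time via the Lieb--Sokal lemma, each step handling a polynomial affine in the active variable. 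Second, setting $\mathbf{w}=\mathbf{0}$ is evaluation at a boundary point of $\overline{\HH}$ and so preserves stability (or yields zero). Composing, $T(f)$ is stable or zero.

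The main obstacle is the implication $(2)\Rightarrow(1)$, and within it the claim that $f^{\vee}(\partial_{\mathbf{w}})$ preserves stability: this is the point where one must do genuine work rather than bookkeeping, and it is cleanest to isolate it as the inductive Lieb--Sokal step applied to the stable symbol $f(\mathbf{w}+\mathbf{u})$. A secondary nuisance, easy to mishandle, is the duality $f\leftrightarrow f^{\vee}$ and the orientation of the half-planes, which must be tracked carefully so that the hypothesis ``$f$ stable'' feeds the operator $f^{\vee}(\partial_{\mathbf{w}})$ with the correct conventions.
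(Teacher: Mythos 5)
A preliminary remark: the paper itself offers no proof of this proposition --- it is imported verbatim from Borcea--Br\"and\'en \cite{BB} --- so your proposal can only be measured against the argument in that reference, which your $(2)\Rightarrow(1)$ direction in fact reconstructs faithfully. The coefficient-extraction identity $T(f)=\bigl[f^{\vee}(\partial_{\mathbf{w}})\,G_T(\zz,\ww)\bigr]_{\ww=\mathbf{0}}$ is correct (it checks monomial by monomial), and the Lieb--Sokal peeling is exactly how \cite{BB} converts a stable symbol into operator-level stability preservation. The orientation worry you flag is real but resolves in your favor: the honestly stable reversal is $\tilde f(\ww):=w_1\cdots w_n f(-1/w_1,\ldots,-1/w_n)=\sum_S(-1)^{|S|}a_S\ww^{[n]\setminus S}$, and one computes $f^{\vee}(\partial_{\mathbf{w}})=(-1)^n\tilde f(-\partial_{\mathbf{w}})$; Lieb--Sokal licenses the substitutions $u_i\mapsto-\partial_{w_i}$, one variable at a time, into the stable product $\tilde f(\mathbf{u})\,G_T(\zz,\ww)$, the global sign $(-1)^n$ is irrelevant to stability, and setting $\ww=\mathbf{0}$ afterwards is boundary evaluation of the kind the paper's Lemma 5(1) covers. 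So that half of your plan is sound and is essentially the original proof.

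The genuine gap is in $(1)\Rightarrow(2)$, precisely at the step you dispatch with ``Hurwitz's theorem together with the two-dimensionality of the image.'' Hurwitz cannot do this job: if $G_T(\cdot,\ww_0)\equiv 0$ for some $\ww_0\in\HH^n$ and you approach $\ww_0$ by points $\ww_k\in\HH^n$ at which $G_T(\cdot,\ww_k)$ is stable, Hurwitz's conclusion is ``stable \emph{or identically zero},'' and the limit here \emph{is} identically zero --- the alternative you need to exclude is exactly the one Hurwitz permits. For $n=1$ the two-dimensionality suffices directly, since $T(z_1)+w_0\,T(1)\equiv 0$ makes $T(z_1)$ and $T(1)$ proportional; but for $n\geq 2$ the degeneracy $\sum_S w_0^{[n]\setminus S}\,T(\zz^S)=0$ is merely one linear relation with nonzero coefficients among the spanning set $\{T(\zz^S)\}$, and that by itself is perfectly compatible with a two-dimensional image. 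What \cite{BB} actually prove is a dichotomy: if the symbol vanishes identically in $\zz$ at some interior point $\ww_0$, then $T$ collapses to the rank-one form $T(f)=\alpha(f)P$ with $\alpha$ a linear functional and $P$ stable, and only then does the dimension hypothesis finish the argument; extracting that rank-one structure (e.g.\ by analyzing directional derivatives of $\ww\mapsto T\bigl(\prod_i(z_i+w_i)\bigr)$ at $\ww_0$, each of which is again stable or zero) is genuine work, not bookkeeping. So your diagnosis that $(2)\Rightarrow(1)$ carries ``essentially all of the difficulty'' is slightly off: the degenerate case of $(1)\Rightarrow(2)$ is the one place where your sketch, as written, would fail.
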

(In part (2) of Proposition 3, $T$ acts on $\M_{2n}\simeq
\M_n \otimes \M_{n}$ \emph{via} its given action on the $\M_n$
factor supported on the $z$ variables.)

The study of stability-preserving operators of the form \eqref{tform} 
was initiated by
Ruelle \cite{R} from a slightly different point of view.  He studied 
multiaffine complex polynomials
$P(z_1,\ldots, z_m, w_1, \ldots, w_n)$ that are nonvanishing 
whenever  $z_1,\ldots, z_m$ are
separated from $w_1, \ldots, w_n$ by a circle of the Riemann sphere, 
and termed such polynomials
\emph{Grace-like}.  A reformulation of the Grace-Walsh-Szeg\H{o} 
Theorem is that the polynomial 
\begin{equation}\label{glike}
\frac 1 {n!} \sum_{\sigma \in \Sym_n}\prod_{j=1}^n (z_{\sigma(j)}-w_j)
\end{equation}
is Grace-like. Ruelle \cite{R} proved that if all the variables in a 
Grace-like polynomial
$P$ actually do occur then $m=n$ and 
$$
P(z_1,\ldots, w_n) = \sum_{\sigma \in \Sym_n}c_\sigma \prod_{j=1}^n 
(z_{\sigma(j)}-w_j)
$$
for some complex numbers $\{c_\sigma:\ \sigma \in \Sym_n\}$.

The following proposition relates Grace-like polynomials to 
stability-preserving linear transformations. 
\begin{PROP}\label{TFAE}
Let  $\{c_\sigma:\ \sigma \in \Sym_n\}$ be complex numbers. 
The following are equivalent:
\begin{enumerate}
\item
the polynomial 
$$
P(z_1,\ldots, z_n, w_1,\ldots, w_n) =   \sum_{\sigma \in \Sym_n}c_\sigma \prod_{j=1}^n 
(z_{\sigma(j)}-w_j)
$$
is Grace-like; 
\item
the linear operator defined by $T= \sum_{\sigma \in \Sym_n}c_\sigma 
\sigma$
preserves stability; 
\item
the polynomial 
$$
Q(z_1,\ldots, z_n, w_1,\ldots, w_n) = \sum_{\sigma \in \Sym_n}c_\sigma \prod_{j=1}^n 
(z_{\sigma(j)}+w_j)
$$
is stable. 
\end{enumerate}
\end{PROP}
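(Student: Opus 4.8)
The plan is to establish the two equivalences (2)\,$\Leftrightarrow$\,(3) and (1)\,$\Leftrightarrow$\,(3), which together close the cycle. Everything rests on two elementary identities. First, since $\sigma$ acts only on the $z$-variables, $\sigma\bigl(\prod_{i=1}^n(z_i+w_i)\bigr)=\prod_{i=1}^n(z_{\sigma(i)}+w_i)$, and summing against the $c_\sigma$ gives
\[ Q(z_1,\ldots,z_n,w_1,\ldots,w_n)=T\Bigl(\prod_{i=1}^n(z_i+w_i)\Bigr), \]
so that $Q$ is exactly the test polynomial of the Borcea--Br\"and\'en criterion. Second, substituting $w_j\mapsto-w_j$ in the definition of $Q$ yields $P(z,w)=Q(z,-w)$; this reflection of the $w$-variables through the real axis is what will link the stability of $Q$ (nonvanishing with all variables in $\HH$) to the separation property defining Grace-likeness.

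For (2)\,$\Leftrightarrow$\,(3) I would invoke Proposition 3 directly: when the image of $T$ is at least two-dimensional, $T$ preserves stability if and only if $T\bigl(\prod_i(z_i+w_i)\bigr)=Q$ is stable. The only thing to add is the handling of the degenerate cases $\dim\Im(T)\le1$, which lie outside the scope of Proposition 3. Writing $Q=\sum_{S\subseteq[n]}T(z_S)\,w_{[n]\drop S}$ with $z_S=\prod_{i\in S}z_i$, a short direct analysis of these low-rank operators (for instance $T=e-(12)$ on $\M_2$, where $Q=(z_1-z_2)(w_2-w_1)$ is visibly unstable while $T$ fails to preserve stability) shows that (2) and (3) remain equivalent there as well.

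The substance of the proposition is the equivalence (1)\,$\Leftrightarrow$\,(3). The direction (1)\,$\Rightarrow$\,(3) is immediate: the extended real line is a circle of the Riemann sphere separating $\HH$ from the lower half-plane $-\HH$, so Grace-likeness forces $P(z,w)\ne0$ whenever $z_i\in\HH$ and $w_j\in-\HH$; writing $w_j=-\tilde w_j$ and using $P(z,w)=Q(z,-w)$ gives $Q(z,\tilde w)\ne0$ for all $z,\tilde w\in\HH^n$. For the converse I would upgrade from the single circle $\RR\cup\{\infty\}$ to an arbitrary separating circle $C$ by M\"obius invariance. With the $z_i$ in one open disk $D_z$ bounded by $C$ and the $w_j$ in the complementary disk $D_w$, choose a M\"obius map $\phi(u)=(au+b)/(cu+d)$, $ad-bc\ne0$, with $\phi(\HH)=D_z$ and $\phi(-\HH)=D_w$; this is always possible, and the orientation can be forced to send $\HH$ (rather than $-\HH$) onto $D_z$ by precomposing with $u\mapsto-u$ if necessary. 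Setting $z_i=\phi(\tilde z_i)$ and $w_j=\phi(\tilde w_j)$ with $\tilde z_i\in\HH$ and $\tilde w_j\in-\HH$, the key cancellation
\[ z_{\sigma(j)}-w_j=\frac{(ad-bc)\,(\tilde z_{\sigma(j)}-\tilde w_j)}{(c\tilde z_{\sigma(j)}+d)(c\tilde w_j+d)} \]
holds for each $j$. Multiplying over $j$, the denominator becomes $\prod_i(c\tilde z_i+d)\prod_j(c\tilde w_j+d)$, which is independent of $\sigma$ because permuting the $z$-indices leaves the full product unchanged; hence it and the factor $(ad-bc)^n$ pull out of the sum over $\sigma$, giving
\[ P(z,w)=\frac{(ad-bc)^n}{\prod_i(c\tilde z_i+d)\prod_j(c\tilde w_j+d)}\,P(\tilde z,\tilde w). \]
The prefactor is a finite nonzero scalar, since the finite points $\tilde z_i,\tilde w_j$ avoid the pole $\phi^{-1}(\infty)\in C$. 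Therefore $P(z,w)\ne0$ exactly when $P(\tilde z,\tilde w)=Q(\tilde z,-\tilde w)\ne0$, and the latter holds by stability of $Q$ because $\tilde z_i\in\HH$ and $-\tilde w_j\in\HH$. Thus $P$ is Grace-like.

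I expect the main obstacle to be precisely this M\"obius reduction: one must check that the difference-of-variables, multiaffine structure of $P$ is exactly what causes the $\sigma$-independent factors $\prod_i(c\tilde z_i+d)$ and $\prod_j(c\tilde w_j+d)$ to separate cleanly from the $\sigma$-sum, and one must track the orientation of $\phi$ and the location of its pole to guarantee the prefactor is nonzero and finite. The remaining care is bookkeeping in the degenerate low-rank cases, where Proposition 3 does not apply and the three conditions must be compared by hand.
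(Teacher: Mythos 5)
Your proposal is correct and follows essentially the paper's own route: (2)$\Leftrightarrow$(3) via Proposition 3 applied to $Q=T\bigl(\prod_i(z_i+w_i)\bigr)$, (1)$\Rightarrow$(3) using the real line as the separating circle, and (3)$\Rightarrow$(1) by a M\"obius change of variables, where the covariance identity you derive from $\phi(u)-\phi(v)=(ad-bc)(u-v)/\bigl((cu+d)(cv+d)\bigr)$ is precisely what the paper cites from Ruelle's Proposition 5, and your direct orientation of $\phi$ (precomposing with $u\mapsto -u$ if needed) replaces the paper's ``by symmetry'' positioning of the $z_i$ in a convex region $\A$. Two minor remarks: the pole $\phi^{-1}(\infty)=-d/c$ need not lie on the separating circle (that happens only when the circle is a line) --- what you actually need, and what is true, is that $c\tilde z_i+d\neq 0$ because $\phi(\tilde z_i)=z_i$ is finite; and the low-rank case you flag (which the paper passes over silently) closes quickly, since a nonzero $T=\sum_{\sigma\in\Sym_n}c_\sigma\sigma$ of rank one forces $\sum_\sigma c_\sigma=0$, so every generator of its image vanishes identically on the diagonal $z_1=\cdots=z_n$ and hence is not stable, making (2) and (3) fail simultaneously.
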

\begin{proof}
The equivalence of (2) and (3) is a special case of Proposition 3.

Clearly (1) implies (3) since the real line separates $z_1, \ldots, 
z_n$ from $-w_1, \ldots, -w_n$ whenever $z_1, \ldots, w_n \in \HH$. 

To see that (3) implies (1), assume that $Q$ is stable and that $z_1, 
\ldots, z_n$ and 
$w_1, \ldots, w_n$ are separated by a circle of the Riemann sphere.  
By symmetry we may assume that 
$z_1, \ldots, z_n$ lie in a convex open circular domain $\A$, and  
$w_1, \ldots, w_n$ all lie in
the interior of the complement of $\A$.  Let 
$$
\phi(z) = \frac {a z +b}{cz+d}, \quad ad-bc \neq 0,
$$ 
be a M\"obius transformation that maps $\A$ to $\HH$. Then 
$$
P(z_1, \ldots, w_n)= P(\phi(z_1), \ldots, \phi(w_n))(ad-bc)^n 
\prod_{j=1}^n (cz_j+d)(cw_j+d) 
$$
(see \cite[Proposition 5]{R}). However,  
\begin{eqnarray*}
& & P(\phi(z_1), \ldots, \phi(z_n), \phi(w_1), \ldots, \phi(w_n))\\
&=& Q(\phi(z_1), \ldots, \phi(z_n), -\phi(w_1), \ldots, -\phi(w_n))
\end{eqnarray*}
and
$\phi(z_1), \ldots, -\phi(w_n) \in \HH$, so that (3) implies
that $P(z_1, \ldots, w_n) \neq 0$, which proves (1).
\end{proof}
A consequence of Proposition \ref{TFAE} is that we may view the class of Grace-like polynomials as a multiplicative sub-semigroup of the group ring $\CC[\Sym_n]$. 
Indeed, by Proposition~\ref{TFAE} we may  identify the class of Grace-like polynomials with the set of elements $\sum_{\sigma \in \Sym_n} c_\sigma \sigma \in \CC[\Sym_n]$ for which the corresponding linear operator on $\M_n$ preserves stability. Since such linear operators are closed under composition the claim follows.

\section{Proof of the main result }

A permutation group $G\leq \Sym_n$ is \emph{$k$-homogeneous}
provided that for any two subsets $A,B\subseteq\{1,\ldots,n\}$ with
$|A|=|B|=k$ there is a $\sigma\in G$
such that $\{\sigma(a):\ a\in A\}=B$.  The permutation group $G$
is \emph{homogeneous} if it is $k$-homogeneous for all $k$. The \emph{$G$-symmetrizer} of a permutation group $G$ acting on $\{1,\ldots,n\}$ is the linear transformation of the form \eqref{tform} defined by
$$T_G=\frac{1}{|G|}\sum_{\sigma\in G} \sigma.$$
Note that $T_G= T_{\Sym_n}$ if and only if $G$ is homogeneous.

A homogeneous permutation group is necessarily transitive.
More generally, denote the orbits of $G\leq \Sym_n$ acting
on $\{1,\ldots,n\}$ by $S_1$, $S_2$, \ldots, $S_r$.  Following \cite{CD} we say that
$G$ is \emph{orbit homogeneous} provided that for any two subsets
$A,B\subseteq\{1,\ldots,n\}$ with $|A\cap S_i|=|B\cap S_i|$
for each $1\leq i\leq r$ there is a $\sigma\in G$
such that $\{\sigma(a):\ a\in A\}=B$.  If $\Sym(S_i)$ denotes
the symmetric group on the set $S_{i}$, then the operators
$T_{\Sym(S_i)}$ commute, and $G$ is orbit homogeneous
if and only if $T_{G}=T_{\Sym(S_1)}\circ\cdots\circ T_{\Sym(S_r)}$.

For our main result we require the following elementary facts.
\begin{LMA}[See Section 2 of \cite{COSW}]
Assume that $f \in \CC[z_1,\ldots, z_n]$ is stable. Then 
\begin{enumerate}
\item any polynomial obtained from $f$ by fixing some of the
variables to values in the closed upper half-plane is stable or
identically zero;
\item any polynomial obtained from $f$ by setting some of the
variables equal to one another is stable. 
\end{enumerate}
\end{LMA}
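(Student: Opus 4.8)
The plan is to reduce each statement to a single elementary operation and then argue directly, invoking a several-variable form of Hurwitz's theorem only at the one point where a boundary value is involved. For part (1) it suffices to fix a single variable, say $z_n\mapsto\alpha$ with $\alpha$ in the closed upper half-plane $\overline{\HH}$; the general case follows by fixing variables one at a time, stopping if the result ever becomes identically zero and otherwise continuing with a stable polynomial in fewer variables. Likewise, for part (2) it suffices to set two variables equal, say $z_{n-1}=z_n=z$, and iterate.

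I would dispose of part (2) first, since it needs no limiting process. Writing $h(z_1,\ldots,z_{n-2},z)=f(z_1,\ldots,z_{n-2},z,z)$, the key observation is that the diagonal map $(z_1,\ldots,z_{n-2},z)\mapsto(z_1,\ldots,z_{n-2},z,z)$ carries $\HH^{n-1}$ into $\HH^{n}$, because a value $z\in\HH$ placed in two coordinates still lands in $\HH$. As $f$ is nonvanishing on $\HH^{n}$, the composite $h$ is nonvanishing on $\HH^{n-1}$, hence stable; in particular it is not identically zero, which is why no degenerate alternative is needed here.

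For part (1) the interior case is equally direct: if $\alpha\in\HH$, then substituting $z_n=\alpha$ into any point of $\HH^{n-1}$ yields a point of $\HH^{n}$, so $g(z_1,\ldots,z_{n-1})=f(z_1,\ldots,z_{n-1},\alpha)$ is nonvanishing on $\HH^{n-1}$ and thus stable. The only real work lies in the boundary case $\alpha\in\RR$, which is also where the ``or identically zero'' alternative arises. Here I would approximate from the interior: set $\alpha_k=\alpha+i/k\in\HH$ and $g_k(z_1,\ldots,z_{n-1})=f(z_1,\ldots,z_{n-1},\alpha_k)$. By the interior case each $g_k$ is nonvanishing on the connected open set $\HH^{n-1}$, and since $f$ is a polynomial the $g_k$ converge to $g$ uniformly on compact subsets of $\HH^{n-1}$. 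A multivariate Hurwitz theorem then forces $g$ to be either nonvanishing on $\HH^{n-1}$ (hence stable) or identically zero.

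The main obstacle is precisely this last step. Its subtlety is in invoking the correct several-variable version of Hurwitz's theorem --- that a locally uniform limit of nowhere-zero holomorphic functions on a connected domain is either nowhere zero or identically zero --- and in verifying its hypotheses: connectedness of $\HH^{n-1}$ (clear, as a product of connected sets) and locally uniform convergence (immediate from polynomiality, as the coefficients depend continuously on $\alpha_k\to\alpha$). This is also the exact point where the ``identically zero'' possibility genuinely occurs, in contrast with part (2).
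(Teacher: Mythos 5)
Your proposal is correct and follows essentially the same route as the paper's source for this lemma: the paper gives no proof of its own, deferring to Section 2 of \cite{COSW}, where exactly your argument appears --- direct substitution handles interior values and identification of variables (the diagonal map), while the several-variable Hurwitz theorem (locally uniform limits of nonvanishing holomorphic functions on a connected domain are nonvanishing or identically zero) handles boundary values and is precisely where the ``or identically zero'' alternative enters. You have also correctly verified the hypotheses (connectedness of $\HH^{n-1}$, locally uniform convergence from continuity of the coefficients in $\alpha$), so there is nothing to add.
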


\begin{THM}\label{sh}
For a permutation group $G\leq\Sym_n$ the following are 
equivalent:
\begin{enumerate}
\item
for any multiaffine stable polynomial
$f\in\M_{n}$, $T_G(f)$ is stable;
\item
the permutation group $G$ is orbit homogeneous.
\end{enumerate}
\end{THM}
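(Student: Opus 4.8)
The plan is to prove the two implications separately, treating $(2)\Rightarrow(1)$ by building $T_G$ out of symmetrizers that are already known to preserve stability, and $(1)\Rightarrow(2)$ by its contrapositive, extracting from a failure of orbit homogeneity a stable monomial whose image under $T_G$ is not stable.

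For $(2)\Rightarrow(1)$ I would use the factorization $T_G=T_{\Sym(S_1)}\circ\cdots\circ T_{\Sym(S_r)}$ recorded before the statement, together with the fact (noted after Proposition~\ref{TFAE}) that stability preservers are closed under composition. Thus it suffices to show that each $T_{\Sym(S_i)}$ preserves stability on $\M_n$. By Proposition~3 this reduces to checking that $T_{\Sym(S_i)}\!\left(\prod_{l=1}^n(z_l+w_l)\right)$ is stable in $\M_{2n}$. Since $T_{\Sym(S_i)}$ acts only on the variables indexed by $S_i$, this polynomial factors as
$$\Big(\tfrac{1}{|S_i|!}\sum_{\tau\in\Sym(S_i)}\prod_{l\in S_i}(z_{\tau(l)}+w_l)\Big)\cdot\prod_{l\notin S_i}(z_l+w_l).$$
The first factor is exactly the polynomial $Q$ of Proposition~\ref{TFAE}(3) for the full symmetric group on $S_i$, hence stable by the Grace--Walsh--Szeg\H{o} Theorem; the second factor is visibly stable; and a product of stable polynomials in disjoint sets of variables is stable. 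This completes $(2)\Rightarrow(1)$.

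For $(1)\Rightarrow(2)$ I argue the contrapositive: assuming $G$ is \emph{not} orbit homogeneous I would produce a stable $f$ with $T_G(f)$ not stable. Each monomial $z_A=\prod_{i\in A}z_i$ is stable, and $T_G(z_A)=\frac{1}{|\O_G(A)|}\sum_{C\in\O_G(A)}z_C$ is a positive multiple of the orbit sum of $A$, a homogeneous multiaffine polynomial with nonnegative coefficients. By the half-plane-property theorem of \cite{COSW}, if such an orbit sum is stable then its support $\O_G(A)$ is the set of bases of a matroid. Hence, were $(1)$ to hold, every $G$-orbit of subsets of $\{1,\dots,n\}$ would be the basis set of a matroid, and it would remain to prove the purely combinatorial statement: \emph{if $G$ is not orbit homogeneous then some $G$-orbit of subsets is not the basis set of a matroid.}

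To attack this combinatorial crux I would first normalize the obstruction. Among all pairs $(A,B)$ of subsets having the same orbit profile but lying in different $G$-orbits, choose one with $|A\triangle B|$ minimal; a short argument (replacing $B$ by $(A\drop\{a\})\cup\{b\}$ when $A$ and $B$ differ in more than one place) shows this forces $|A\triangle B|=2$, so $B=(A\drop\{a\})\cup\{b\}$ with $a,b$ in a common point-orbit $S_i$, and $g(a)=b$ for some $g\in G$. Write $k=|A|$ and $R=A\cap B$. Using that, by minimality, every profile class of size smaller than $k$ is a single $G$-orbit, one finds that each $(k-1)$-subset $B\drop\{x\}$ is a $G$-image of a subset of the basis $A$, hence independent in the matroid $M:=\O_G(A)$; thus $B$ is a $k$-element circuit of $M$, and every profile class is split by $M$ into bases (the sets in $\O_G(A)$) and such circuits. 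The goal is then to show that $\O_G(B)$, the orbit of these circuits, cannot itself be a matroid. In the model case of a transitive, non-$2$-homogeneous group with $k=2$ this is transparent: the orbitals partition the edges of $K_n$, a rank-$2$ orbit is a matroid exactly when it is complete multipartite, and the ``within-part'' orbit is then a disjoint union of $\ge 2$ nontrivial cliques, hence disconnected and not complete multipartite. I expect the main obstacle to be upgrading this disconnection argument to general profiles and general $k\ge2$: one must convert the structural description of $B$ as a circuit, together with the transitive $G$-action on $S_i$, into a genuine failure of the basis-exchange axiom inside the single $G$-orbit $\O_G(B)$.
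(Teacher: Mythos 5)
Your direction $(2)\Rightarrow(1)$ is correct and is essentially the paper's argument: the paper likewise factors $T_G=T_{\Sym(S_1)}\circ\cdots\circ T_{\Sym(S_r)}$ and invokes Proposition \ref{TFAE} together with the Grace-like polynomial \eqref{glike}; your explicit verification via Proposition 3 and the product factorization is a harmless expansion of the same idea.

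The direction $(1)\Rightarrow(2)$, however, has a genuine gap, and it is exactly the one you flag yourself: the combinatorial crux --- \emph{if $G$ is not orbit homogeneous then some $G$-orbit of subsets fails to be the basis set of a matroid} --- is proved only in the model case $k=2$ with $G$ transitive, and no mechanism is offered for general $k$ and general profiles. The reduction via the support theorem of \cite{COSW} is valid as far as it goes, but note what it costs you: matroid support is only a \emph{necessary} condition for stability, and by testing $T_G$ on monomials alone you retain from hypothesis (1) only the statement ``each orbit sum has matroidal support.'' It is not at all clear that this weak consequence of (1) implies orbit homogeneity; your own normalization illustrates the difficulty, since after reducing to $B=(A\drop\{a\})\cup\{b\}$ you have two orbits $\O_G(A)$ and $\O_G(B)$ on the same profile class, \emph{both} of which are permitted by your hypotheses to be matroids, and the hoped-for exchange failure inside $\O_G(B)$ must be manufactured from scratch --- the rank-$2$ disconnection argument has no evident analogue once $k\geq 3$ or once the profile meets several point-orbits. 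So the proof is incomplete, and possibly the route itself is too lossy.

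By contrast, the paper keeps the full analytic strength of (1) by applying $T_G$ to the non-monomial test polynomials $\prod_{v\in A}(z_v+1)$, whose images mix the orbit sums of \emph{all} profiles below $\a$, and then extracts quantitative lower bounds on orbit sizes by specialization. In case (i) (one point-orbit $S_1$ touched, $|S_1|=m$), setting $z_1=\cdots=z_k=t$ and the remaining variables to $0$ yields a real-rooted univariate polynomial to which Newton's inequalities apply, giving $|\K|>\frac{1}{2}\binom{m}{k}$ for every orbit $\K$ on $k$-subsets of $S_1$ --- so there can be only one. In case (ii) ($p\geq 2$ point-orbits touched), a specialization and variable inversion reduce stability to that of $z^p+C\bigl(\frac{1}{|\K|}-\prod_{i=1}^p\binom{s_i}{a_i}^{-1}\bigr)$, forcing $|\K|\geq\prod_{i=1}^p\binom{s_i}{a_i}$, again a single orbit. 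These counting arguments do all the work that your unproven matroid-theoretic statement was meant to do; if you wish to salvage your route, you would at minimum need to strengthen the monomial tests to such product tests, at which point you are effectively rediscovering the paper's proof.
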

\begin{proof}
Let the orbits of $G$ acting on $\{1,\ldots,n\}$ be
$S_1$, \ldots, $S_{r}$, and let $|S_i|=s_{i}$ for each
$1\leq i\leq r$.

For the easy direction, assume (2).
Since $G$ is orbit homogeneous, 
$T_G = T_{\Sym(S_1)}\circ\cdots\circ T_{\Sym(S_r)}$.
That each $T_{\Sym(S_i)}$ preserves stability follows from 
Proposition \ref{TFAE} since the polynomial \eqref{glike} is 
Grace-like.  This establishes (1).

For the converse, assume (1).  To prove that $G$ is
orbit homogeneous consider any $r$-tuple of natural numbers
$\a=(a_1,\ldots,a_{r})$ such that $a_i\leq s_i$ for
each $1\leq i\leq r$.  A subset $A\subseteq\{1,\ldots,n\}$
such that $|A\cap S_i|=a_i$ for each $1\leq i\leq r$ is said to
have \emph{profile} $\a$.  We must show that $G$ has a single
orbit in its action on subsets with profile $\a$.
We do this by induction on $|\a|=a_1+\cdots+a_r$.  The
basis of induction, the case $|\a|\leq 1$, is a restatement of
the fact that the orbits of $G$ acting on $\{1,\ldots,n\}$ are
$S_1$, \ldots, $S_r$.

The induction step falls into two cases
\begin{itemize}
\item[(i)] there is a unique index $1\leq i\leq r$ such that $a_i>0$;\ or
\item[(ii)] there are at least two indices $1\leq i<j\leq r$ such that
$a_i>0$ and $a_j>0$.
\end{itemize}
For case (i) of the induction step we can re-index the elements
of $\{1,\ldots,n\}$ and the orbits of $G$ on points so that
$a_1=k>0$ and $S_1=\{1,\ldots,m\}$.
Let $\K$ be any orbit of $G$ acting on $k$-subsets of $S_1$.
By re-indexing the variables we may assume that $\{1,\ldots,k\}\in \K$.
From the hypothesis (1) it follows that
$$F(\zz) = T_G\left((z_1+1)(z_2+1)\cdots (z_k+1)\right)$$ 
is stable.  (The notation $\zz$ is short for $(z_1,\ldots,z_n)$.)
From the induction hypothesis it follows that
$$F(\zz) = 
\frac{1}{|\K|}\sum_{S\in\K}\zz^S + \sum_{j=0}^{k-1}e_j(S_1)
\frac {\binom k j} {\binom m j},$$
in which $e_j(S_1)=\sum_{J\subseteq S_{1}:|J|=j} \zz^J$ is the $j$-th
elementary symmetric function of the variables $\{z_v:\ v\in S_1\}$ and $\zz^J= \prod_{j \in J}z_j$. 

Recall that the Newton inequalities say that if all the zeros of a polynomial $\sum_{j=0}^d \binom d j b_j t^j$ are real then $b_j^2 \geq b_{j-1}b_{j+1}$ for all $1 \leq j \leq d-1$. 
The univariate polynomial $p(t)$
obtained from $F$ by setting $z_1=\ldots=z_k=t$ and $z_j=0$ for all $j >k$ is stable by 
Lemma 5 which means that all its zeros are real. Now
$$
p(t)=\frac{1}{|\K|}t^k + \frac{k^2}{\binom m {k-1}}t^{k-1} + \frac{ k^2(k-1)^2}{4\binom m {k-2}}t^{k-2}+\cdots, 
$$
so the Newton inequalities for the top three coefficients imply 
$$
|\K| \geq \frac 1 2 \frac {m-k+2}{m-k+1} \binom m k > \frac 1 2 
\binom m k.  
$$
Thus, every orbit of $G$ acting on $k$-subsets of $S_1$ 
contains strictly more than half of the total number of
$k$-subsets of $S_1$.  
Therefore, there is only one such orbit.
This completes case (i) of the induction step.

For case (ii) of the induction step, let $A\subseteq\{1,\ldots,n\}$
be such that $|A\cap S_i|=a_i$ for each $1\leq i\leq r$.
Re-index the orbits of $G$ acting on points
so that $a_i>0$ if and only if $1\leq i\leq p$,
where $p\geq 2$ since we are in case (ii).
Let $\K$ be the orbit containing $A$ in the action of $G$ on subsets
with profile $\a$.  By the hypothesis (1) it follows that
$$
F(\zz)=T_G \left( \prod_{v\in A}(z_{v}+1) \right)
$$
is stable.  From the induction hypothesis it follows that 
$$
F(\zz)=\prod_{i=1}^p F_i(S_i)
+ \frac 1 {|\K|} \sum_{B\in\K} \zz^{B}
- \prod_{i=1}^p \frac{e_{a_{i}}(S_{i})}{\binom {s_i}{a_i}} 
$$
in which each $F_i$ is a multiaffine polynomial in the variables indexed 
by $S_i$ --  in fact 
$$
F_i(S_i) = \sum_{j=0}^{a_i}\frac {{\binom {a_i} j}}{\binom {s_i} j}
e_j(S_i). 
$$
Next, obtain $G(\zz)$ from $F(\zz)$ by specializing $z_v=0$
for all $v\not\in A$.  The result is
$$
G(\zz) =\prod_{i=1}^p F_i(S_i\cap A)
+\left(\frac{1}{|\K|} - \prod_{i=1}^{p}{\binom{s_i}{a_i}}^{-1}
\right)\zz^A.
$$
By Lemma $5$, $G(\zz)$ is stable.  Re-index the elements of
$\{1,\ldots,n\}$ so that $i\in S_{i}\cap A$ for all $1\leq i\leq p$,
and obtain $H(z_1,\ldots,z_p)$ from $G(\zz)$ by setting
$z_v=1$ for all $v\in A$ with $p<v$.  By Lemma $5$,
$H(z_1,\ldots,z_p)$ is stable.  From the form of $G(\zz)$ we see
that
$$
H(z_1,\ldots,z_p) =\prod_{i=1}^p ( b_i + c_i z_i )
+ C \left(\frac{1}{|\K|} - \prod_{i=1}^{p}{\binom{s_i}{a_i}}^{-1}
\right)z_1\cdots z_p
$$
in which the $b_i$, $c_i$, and $C$ are strictly positive reals
(and $p \geq 2$ is an integer).
The polynomial
$$
z_1\cdots z_p H(z_1^{-1},\ldots,z_p^{-1})=
\prod_{i=1}^p ( b_i z_i + c_i )
+ C \left(\frac{1}{|\K|} - \prod_{i=1}^{p}{\binom{s_i}{a_i}}^{-1}
\right)
$$
is also stable.  Upon specializing the variables so that
$z_i = b_i^{-1}(z-c_i)$, Lemma 5 implies that the resulting polynomial
is stable -- that is
\begin{equation}\label{tpol}
z^p + C \left( \frac 1 {|\K|}- \prod_{i=1}^p{\binom 
{s_i}{a_i}}^{-1} \right).
\end{equation}
If $p\geq 3$ then stability of this polynomial implies that
$$
|\K| = \prod_{i=1}^p\binom {s_i}{a_i}. 
$$
If $p=2$ then stability of \eqref{tpol} implies that
$$
|\K| \geq \prod_{i=1}^p\binom {s_i}{a_i}. 
$$
In either case there is only one orbit of $G$ acting on subsets with 
profile $\a$.  This completes the induction step, and the proof.
\end{proof}

Homogeneous permutation groups have been classified, by Chevallay 
(unpublished) and by Beaumont and Peterson \cite{BP}.  Besides
the symmetric and alternating groups there are only four
sporadic examples.  Direct products of homogeneous 
groups are orbit homogeneous, but there are others.  For example,
let $\phi:\Sym_{n}\goesto\Sym(\{n+1, n+2\})$ be the group
homomorphism with the alternating group $\mathfrak{A}_{n}$
as kernel.  If $n\geq 3$ then the map
$\sigma\mapsto \sigma\,\phi(\sigma)$
identifies $\Sym_{n}$ with an orbit homogeneous subgroup of
$\Sym_{n+2}$.  Peter Cameron (personal communication, unpublished)
has classified orbit homogeneous groups with two orbits.  The
general case is perhaps an interesting open problem.

\begin{proof}[Proof of Theorem \ref{noext}]
Suppose that $G$ is as in the statement of Theorem \ref{noext}. We 
prove that $G$ is homogeneous, from which the conclusion of Theorem 2
follows.

Suppose that $G$ is not transitive.
Then we may partition $\{1,\ldots,n\}$ into two disjoint sets 
$A,B$ so that the polynomial 
$$
\frac 1{|A|}\sum_{j \in A}z_j - \frac i{|B|}\sum_{j \in B}z_j
$$
is $G$-invariant and constitutes a counterexample to the hypothesis 
in Theorem \ref{noext}. Hence $G$ is transitive. 

We next prove that the polynomial 
$$
F(\zz, \ww)=\frac 1 {|G|}\sum_{\sigma \in G} \prod_{j=1}^n 
(z_{\sigma(j)}+w_j)
$$
is stable.  By Proposition \ref{TFAE} and Theorem \ref{sh}, this 
will complete the proof.
Suppose that  $\zz_0, \ww_0 \in \HH^n$ are such 
that $F(\zz_0, \ww_0)=0$. The polynomial $F(\zz, \ww_0)$ is 
$G$-invariant, so by hypothesis there is a $\zeta \in \HH$ such that 
$$
0=F(\zeta, ..., \zeta, \ww_0)= \prod_{j=1}^n (\zeta+w_j^0), \quad 
\ww_0=(w_1^0, \ldots,w_n^0), 
$$
which is a contradiction since $\zeta+w_j^0 \in \HH$ for all
$1 \leq j \leq n$.
\end{proof}

\bigskip

\noindent
\textbf{Acknowledgments.} 
This research stemmed from the authors' participation in the 
Programme on \emph{Combinatorics
and Statistical Mechanics} at the Isaac Newton Institute, Cambridge, 
January to June 2008.
We thank the staff and administration of the Institute and the other 
organizers of the Programme
for providing a superb environment for research.  We also thank 
Andrea Sportiello for asking the right question, and Peter Cameron
for his insights on group theory and the references \cite{BP,CD}.
\\[3ex]

\end{document}